\documentclass[a4paper,12pt,reqno]{amsart}

\usepackage[
  a4paper,
  top=2.4cm,
  bottom=2.4cm,
  left=2.2cm,
  right=2.2cm
]{geometry}
\usepackage[british]{babel}
\usepackage[T1]{fontenc}
\usepackage{lmodern}
\usepackage[utf8]{inputenc}
\usepackage{amsmath,amssymb,amsthm,mathtools}
\usepackage[hidelinks]{hyperref}
\usepackage[capitalise,noabbrev]{cleveref}
\usepackage{setspace}
\usepackage{xcolor} 
\hypersetup{
  pdftitle={Monochromatic cycle partitions in 3-mean edge-colourings},
  pdfauthor={Richard Lang and Guilherme Oliveira Mota},
  pdfsubject={Monochromatic cycle partitions under a mean colour-degree bound},
  pdfkeywords={mean edge-colouring, local edge-colouring, monochromatic cycle,
    cycle partition}
}
 
\newtheorem{theorem}{Theorem}[section]
\newtheorem{lemma}[theorem]{Lemma}

\newtheorem{observation}[theorem]{Observation}

\crefname{theorem}{Theorem}{Theorems}
\crefname{lemma}{Lemma}{Lemmas}
\crefname{remark}{Remark}{Remarks}
\crefname{observation}{Observation}{Observations}

\newcommand{\cP}{\mathcal P}
\newcommand{\dd}{d_{\chi}}

\title{Monochromatic cycle partitions in $3$-mean edge-colourings}
\author{Richard Lang}
\address{Departament de Matem\`atiques, Universitat Polit\`ecnica de Catalunya,
  Barcelona, Spain; and Centre de Recerca Matem\`atica, Barcelona, Spain}
\email{richard.lang@upc.edu}
\author{Guilherme Oliveira Mota}
\address{Departamento de Ci\^encia da Computa\c{c}\~ao, Instituto de Matem\'atica,
  Estat\'istica e Ci\^encia da Computa\c{c}\~ao, Universidade de S\~ao Paulo,
  Rua do Mat\~ao 1010, 05508-090 S\~ao Paulo, Brazil}
\email{mota@ime.usp.br}

\subjclass[2020]{Primary 05D10; Secondary 05C38, 05C70}
\keywords{Mean edge-colouring, local edge-colouring, monochromatic cycle,
  cycle partition}
\date{}
\begin{document}

\begin{abstract}
  Given $r \in \mathbb{N}$ and an edge-coloured complete graph such that the average number of colours incident with a vertex is at most $r$, Conlon and Stein asked whether there is a vertex partition into a bounded
  number of monochromatic cycles, and proved this for $r=2$. We settle the
  first open case by proving the corresponding statement for $r=3$.
\end{abstract}

\maketitle

\section{Introduction}
Problems on monochromatic coverings and partitions in edge-colourings of
graphs can be traced back to a conjecture posed by Lehel in the 1970s: every
red/blue edge-colouring of the complete graph $K_n$ contains a red cycle and a
blue cycle whose vertex sets form a partition of $V(K_n)$, where edges,
vertices, and the empty set are regarded as cycles. Following earlier work of
Gyárfás~\cite{Gyarfas1983}, Łuczak, Rödl and
Szemerédi~\cite{LuczakRodlSzemeredi1998} proved this conjecture for
sufficiently large~$n$. 
Subsequently, Bessy and Thomassé~\cite{BessyThomasse2010} established the conjecture for every~$n$.

Since the 1990s, research on monochromatic cycle partitions has developed in
several directions. A prominent question asks how much of Lehel's conjecture
remains valid when more than two colours are allowed. Erdős, Gyárfás and
Pyber~\cite{ErdosGyarfasPyber1991} proved that the vertex set of every
$r$-edge-coloured complete graph can be partitioned into at most $25r^2\log r$
monochromatic cycles, and conjectured that $r$ cycles should always be enough.
For sufficiently large~$n$, Gyárfás, Ruszinkó, Sárközy and
Szemerédi~\cite{GRSS2006} improved the upper bound to $100r\log r$. The proposed
bound of $r$ cycles, however, was disproved by Pokrovskiy~\cite{Pokrovskiy2014},
who constructed, for every $r\geq 3$ and infinitely many values of $n$, an
$r$-edge-colouring of $K_n$ for which at least $r+1$ cycles are necessary.
Determining the optimal bound remains a hard problem, even for a small
number of colours. Related host graph variants replace the complete
graph by graphs satisfying minimum-degree or independence-number conditions
\cite{AllenEtAl2024,DeBiasioNelsen2017,DiBraccioPatel2026,
KorandiLangLetzterPokrovskiy2021,Letzter2019,Sarkozy2011}. Random graph
analogues were initiated by Kor\'andi, Mousset, Nenadov, \v{S}kori\'c and
Sudakov~\cite{KorandiEtAl2018} and further developed by the first author and
Lo~\cite{LangLo2021}.

Variants of covering and partitioning problems that impose conditions 
weaker than a bound on the total number of colours have also attracted
considerable attention; see the survey of Gy\'arf\'as~\cite{Gyarfas2016}. Given
an edge-colouring $\chi$ of a graph $G$, define the \emph{palette} of $v$ as
  \[
  \cP(v):=\{\chi(vu)\colon vu\in E(G)\}\,,
  \]
and let $\dd(v):=|\cP(v)|$ be the \emph{colour-degree} of $v$. We could allow an
unbounded number of colours, while restricting the number of colours on edges
incident to each vertex as in the following definition: a colouring $\chi$ is
\emph{$r$-local} if $\dd(v)\le r$ for every vertex $v$. Even though the number
of colours can be unbounded, the local hypothesis of an $r$-local colouring
provides a uniform bound for the colour-degree at every vertex.

Conlon and Stein \cite{ConlonStein2016} proved that every $r$-locally coloured
complete graph has a partition into $O(r^2\log r)$ monochromatic cycles; for
$r=2$, they showed that two cycles of different colours suffice. For
sufficiently large graphs, the first author and Stein~\cite{LangStein2017}
obtained the explicit bounds $2r^2$ for $r$-locally coloured complete graphs
and $4r^2$ for $r$-locally coloured balanced complete bipartite graphs.
Complete bipartite host graphs have also been studied under global colour
bounds~\cite{BenevidesQuintinoTalon2025,Haxell1997,
LangSchaudtStein2017}. For sufficiently large complete graphs,
S\'ark\"ozy~\cite{Sarkozy2020} subsequently improved the dependence for
locally coloured complete graphs to $O(r\log r)$.

Relaxing the condition on the colouring even more, one could allow a small
exceptional set of vertices to have large palettes, which is not the
case in $r$-local colourings. This motivates the definition of an
\emph{$r$-mean} colouring of an $n$-vertex graph $G$, which is a colouring satisfying 
\[
  \frac{\sum_{v\in V(G)}\dd(v)}{n} \le r\,.
\]
Mean colourings arose in the Ramsey-theoretic work of Caro \cite{Caro1992} and
were subsequently studied systematically by Caro and Tuza \cite{CaroTuza1993}.
Conlon and Stein also initiated the monochromatic cycle-partition problem for
mean colourings in complete graphs. They proved that every $2$-mean coloured
complete graph can be partitioned into two monochromatic cycles of different
colours, and asked whether, for every fixed $r$, there is a constant $C$ such
that every $r$-mean coloured complete graph has a partition into at most $C$
monochromatic cycles. We settle the first open case by proving the analogue of
Conlon and Stein's result for $r=3$. 

\begin{theorem}\label{thm:main}
  There is an absolute constant $C$ such that the vertex set of every
  $3$-mean edge-coloured complete graph can be partitioned into at most $C$
  disjoint monochromatic cycles.
\end{theorem}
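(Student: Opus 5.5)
Since the number of colours is unbounded, the plan is to reduce to a setting where a known local-colouring result applies, after peeling off a controlled set of vertices with large palettes.

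\textbf{Step 1: reduce to a local colouring on most vertices.} Let $H=\{v:\dd(v)\ge 4\}$ and $L=V\setminus H$, so every vertex of $L$ has colour-degree at most $3$. Each vertex has $\dd(v)\ge 1$. Hence the mean condition gives $3|H|+0\cdot|L|\le \sum_v(\dd(v)-1)\le 2n$, so $|H|\le 2n/3$. This is too weak on its own. I will therefore split further by the number $k$ of vertices with $\dd(v)=1$. Such a vertex $v$ sees a single colour $c$, so $v$ is joined in colour $c$ to every other vertex. If $k\ge 1$, a monochromatic star at $v$ exists, and any two vertices of colour-degree $1$ must share the colour of the edge between them. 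So all colour-degree-$1$ vertices form one set $S$ with a common colour $c$, and every vertex of $S$ is $c$-adjacent to everything. Then the colour-$c$ graph contains the join of $K_{|S|}$ with everything, and a single $c$-cycle covers $S$ plus up to $|S|$ further vertices. When $|S|$ is large this absorbs a lot. The averaging gives
\[
\sum_v \dd(v)\le 3n \quad\Longrightarrow\quad |\{v:\dd(v)\ge 4\}|\le |S| + \tfrac{n-|S|}{\,\cdot\,}\ldots,
\]
more precisely $|H|\le 2|S|/3+\ldots$. The planned dichotomy is: either $|S|\ge |H|$, in which case $S$ can route a $c$-cycle through all of $H$; or $|S|<|H|$, in which case vertices of colour-degree $2$ and $3$ dominate.

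\textbf{Step 2: absorbing the high-degree vertices.} In the first case, build one colour-$c$ cycle that alternates between $S$ and $H$ and also uses the surplus of $S$. The remaining graph on $L\setminus S$ is then $3$-locally coloured as an induced subgraph. Apply the Conlon--Stein bound $O(r^2\log r)$ for $r=3$, which is a constant, to partition the rest. In the second case, bound $|H|$ by $|\{v:\dd(v)\le 2\}|$ via averaging: $\sum(\dd(v)-3)\le 0$ gives $|H|\le |S|\cdot 2+|\{\dd=2\}|$. Then I need to absorb $H$ using vertices of colour-degree $2$. Here I would use a robust absorption-type argument:
\begin{itemize}
\item first find, inside the $3$-local part $L$, a bounded number of monochromatic structures with large minimum degree into $H$;
\item then cover $H$ greedily by monochromatic paths whose interior alternates $H$--$L$;
\item finally close the remaining $3$-local colouring with a regularity-based cover by connected matchings, as in Lang--Stein, which is the standard route for large $n$ and leaves small $n$ trivial via $C\ge n$.
\end{itemize}

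\textbf{Main obstacle.} The hard step is absorbing $H$ when most low-degree vertices have colour-degree $2$ or $3$ rather than $1$. A vertex $h\in H$ can see many colours, each appearing on few edges, so $h$ may have no colour with linear neighbourhood in $L$. My first attempt would be to use the averaging more cleverly. A vertex of colour-degree $2$ partitions all its $n-1$ edges into two colours, so the pairs $(v,h)$ with $v$ of small palette force the colours at $h$ into $v$'s palette. By pigeonhole, each $h$ then has a colour shared with $\Omega(n)$ low vertices. I would group $H$ by this dominant colour and cover each group by a bounded number of paths using a Pósa-type argument in bipartite colour classes, which gives the absolute constant $C$.
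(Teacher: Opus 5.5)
\textbf{There is a genuine gap: the second case is not proved.} Your first case ($|S|\ge|H|$: one colour-$c$ cycle through $S\cup H$, then Conlon--Stein on the $3$-local remainder) is correct. The second case is where all the difficulty lies, and the route you sketch for it fails. Your claim that each $h\in H$ shares a colour with $\Omega(n)$ low vertices is false. Here is a counterexample.
\begin{itemize}
\item Take sets $A_1,\dots,A_m$ of size $2m$ and vertices $h_1,\dots,h_m$.
\item Colour all edges inside $A:=\bigcup_iA_i$, and all edges among the $h_j$, with $c$.
\item Join $h_j$ to all of $A_j$ in a new colour $c_j$. For $i\neq j$, join $h_j$ to one fixed vertex of $A_i$ in colour $c$ and to the rest of $A_i$ in colour $c_i$.
\end{itemize}
Then $\cP(v)=\{c,c_i\}$ for $v\in A_i$ and $\dd(h_j)=m+1$. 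So $\sum_v\dd(v)=5m^2+m\le 3n$ for $n=2m^2+m$, and $S=\varnothing$. Yet every colour class at $h_j$ has at most $2m=O(\sqrt n)$ vertices, and the unique largest one has colour $c_j$. The dominant colours are therefore pairwise distinct, and grouping $H$ by dominant colour already costs $\Theta(\sqrt n)$ paths. The mean condition gives no large colour class at an individual high vertex. It only trades the number of high vertices against their colour-degrees, and nothing in your sketch uses that trade-off. The remaining bullets (\emph{robust absorption}, \emph{structures with large minimum degree into $H$}) are not yet an argument.

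\textbf{Three ideas from the paper's proof are missing.}

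First, the threshold $4$ is too low. The bound $|H|\le 2|S|+|\{v:\dd(v)=2\}|$ allows half the vertices to have colour-degree $2$ and half colour-degree $4$, leaving no slack for absorption. Yet such vertices need no absorbing at all, since Conlon--Stein covers any class of bounded colour-degree directly. The paper removes $M=\{v:3\le\dd(v)\le14\}$ in this way. Then $B=\{v:\dd(v)\ge15\}$ satisfies $|A|\ge6|B|$, where $A=\{v:\dd(v)\le2\}$.

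Second, restricting the reservoir to colour-degree at most $2$ makes its palettes a pairwise-intersecting family of sets of size at most two. So either they use at most three colours in total, or they share a colour $c$. In the first alternative, $B$ together with any $|B|$ vertices of $A$ forms a $3$-coloured $K_{b,b}$, which Lang--Schaudt--Stein covers.

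Third, consider the common-colour case, and let $X\subseteq A$ be the vertices whose palette has size two, with $x:=|X|$. The high vertices with at least $3x/8$ colour-$c$ neighbours in $X$ are absorbed via P\'osa's lemma. The remaining set $Y'$ is then handled all at once rather than colour by colour.
\begin{itemize}
\item Pick one witness $v\in Y'$ with at most $t:=\dd(Y')/|Y'|$ non-$c$ colours.
\item The mean bound forces $x\ge\frac{4}{5}t|Y'|$. Hence the two largest non-$c$ classes of $v$, say in colours $c_1,c_2$, contain at least $\frac{2}{t}\cdot\frac{5x}{8}\ge|Y'|$ vertices of $X$.
\item Each such $u$ has $\cP(u)=\{c,\chi(uv)\}\subseteq\{c,c_1,c_2\}$.
\end{itemize}
Thus all edges between these partners and the whole of $Y'$ use at most three colours, and Lang--Schaudt--Stein applies. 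In the example above, the partners come from $A_1\cup A_2$.
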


Our argument relies on structural properties specific to the case $r=3$, and
genuinely new ideas seem to be required to deal with larger values of $r$.
We finish this introduction by presenting an overview of the proof of
\Cref{thm:main}. In \Cref{sec:preliminaries} we collect the three covering
results used in the argument and prove a one-sided bipartite lemma that absorbs
the part of the vertex set that is dense in a common colour, and
in~\Cref{sec:proof} we prove~\Cref{thm:main}.

\vspace{0.2cm}
\noindent\textit{Overview of the proof.} To prove \cref{thm:main}, we partition the vertices into a low
colour-degree set $A$, an intermediate set $M$, and a high colour-degree set
$B$, and write $a:=|A|$ and $b:=|B|$.  The set $M$ is immediately covered by a
bounded number of monochromatic
cycles from a result by Conlon and Stein (\Cref{thm:local-complete}). The mean
inequality gives~${a\ge6b}$. Moreover, the palettes in $A$ form a
pairwise-intersecting family of sets of size at most two. Consequently, either
the union of these palettes has size at most three, or they all contain a common colour~$c$
(\Cref{lem:palette-dichotomy}). In the first case, if $b$ is not sufficiently
large for \Cref{thm:three-colour-bipartite}, we cover $B$ by singleton cycles;
otherwise, any $b$ vertices of $A$ form with $B$ a balanced bipartite graph
using at most three colours.

Consider the second case and let $A_c$ be the set of vertices of $A$ whose
palette contains only colour $c$, and put $a_c:=|A_c|$. If $a_c\ge b$, then $B$
can be paired with $b$ vertices of $A_c$ in colour $c$. Otherwise, the mean
inequality allows us first to pair all of $A_c$ with an equally large set
$B_0\subseteq B$ in colour $c$. Put $X:=A\setminus A_c$ and $Y:=B\setminus B_0$.
Then $|X|\ge12|Y|$, and every vertex of $X$ has colour-degree exactly two. We
split the vertices of $Y$ according to their colour-$c$ degrees into $X$. The
vertices having at least $3|X|/8$ colour-$c$ neighbours are absorbed by at most
three colour-$c$ cycles, using no more vertices of $X$ than vertices already
covered (\Cref{lem:dense-common}, proved using \Cref{lem:posa}). Let $Y'$ be the
set left after this step and let~$X_0\subseteq X$ be the \emph{reservoir}
vertices used by the absorbing cycles.

Every vertex of $Y'$ has more than $5|X|/8$ non-$c$ neighbours in $X$. If $|Y'|$
is not sufficiently large for \Cref{thm:three-colour-bipartite}, we cover it by
singleton cycles. Otherwise, the residual colour-degree budget allows us to
apply \Cref{lem:bounded-colour-core}. Its proof uses one witness vertex of $Y'$ to
choose the partner set inside at most two non-$c$ colour classes. A non-$c$
witness edge at a vertex~$u\in X$ forces its palette to consist of $c$ and the
colour of that edge. Hence the resulting balanced bipartite graph uses at most
three colours, and a result obtained by the first author, Schaudt and Stein
(\Cref{thm:three-colour-bipartite}) partitions its vertices into a bounded
number of monochromatic cycles. Finally, we are able to cover all uncovered
vertices by using \Cref{thm:local-complete} as these vertices have colour-degree
at most two.

\section{Preliminaries}\label{sec:preliminaries}

We use three known results at different stages of the proof.  P\'osa's lemma
converts a bound on the independence number of an auxiliary graph into a cycle
partition; we shall use it to prove our one-sided bipartite covering lemma.  The
complete graph theorem of Conlon and Stein serves as a cleaning tool: it covers
both the vertices of intermediate colour-degree and the final residue of
colour-degree at most two.  The balanced bipartite theorem of the first author,
Schaudt and Stein is used whenever we obtain a sufficiently large balanced
bipartite graph using at most three colours: directly in the first case of the
palette dichotomy, and again after constructing a partner set for the remaining
high colour-degree vertices.

\begin{lemma}[P\'osa \cite{Posa1963}]\label{lem:posa}
  The vertex set of any graph $H$ can be partitioned into at most
  $\alpha(H)$ disjoint cycles.
\end{lemma}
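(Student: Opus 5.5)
We argue by induction on $\alpha(H)$, allowing (as throughout the paper) single vertices and single edges to count as degenerate cycles, and taking the empty graph to have $\alpha=0$. If $H$ is empty there is nothing to show. Otherwise, the plan is to remove from $H$ one cycle $C$ (possibly degenerate) such that $\alpha(H-V(C))\le \alpha(H)-1$. Induction then partitions $H-V(C)$ into at most $\alpha(H)-1$ cycles, and adding $C$ gives the required partition of $V(H)$.

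To find $C$, take a longest path $P=v_0v_1\dots v_k$ in $H$. By maximality of $P$, every neighbour of $v_0$ lies on $P$. Let $j$ be the largest index with $v_0v_j\in E(H)$, and let $C:=v_0v_1\dots v_jv_0$. If $j\ge 2$ this is a genuine cycle. If $j=1$ it is the edge $v_0v_1$, and if $v_0$ is isolated it is the single vertex $v_0$. In every case $C$ is a (possibly degenerate) cycle containing $v_0$, and all neighbours of $v_0$ lie in $V(C)$.

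Now let $I$ be any independent set of $H-V(C)$. Since $v_0$ has no neighbours outside $V(C)$, the set $I\cup\{v_0\}$ is independent in $H$. Hence $|I|+1\le\alpha(H)$, which gives $\alpha(H-V(C))\le\alpha(H)-1$ and completes the induction.

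The only delicate point is the convention for degenerate cycles. If $v_0$ has degree $0$ or $1$, a genuine cycle through $v_0$ containing all its neighbours may not exist. This is exactly why vertices and edges are admitted as cycles in the statement, and with that convention no further obstacle arises.
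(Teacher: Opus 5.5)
Your proof is correct. It is the standard argument for P\'osa's lemma: the endpoint $v_0$ of a longest path closes a (possibly degenerate) cycle with its furthest neighbour on the path, so this cycle contains $v_0$ and all of its neighbours, and removing it lowers the independence number by at least one. Your treatment of vertices and edges as cycles matches the convention in the paper's introduction and the way the lemma is used in the proof of \cref{lem:dense-common}. The paper does not prove this lemma itself but cites it from P\'osa, so there is no proof to compare against.
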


The next result is our complete graph cleaning tool. Notice that passing to
an induced subgraph cannot increase any colour-degree, so a class defined by
an upper bound on its colour-degrees automatically satisfies the required local
hypothesis.

\begin{theorem}[Conlon--Stein \cite{ConlonStein2016}]
  \label{thm:local-complete}
  For every positive integer $r$, there is a constant $g(r)$ such that the
  vertex set of every $r$-locally coloured complete graph can be partitioned
  into at most~$g(r)$ disjoint monochromatic cycles.
\end{theorem}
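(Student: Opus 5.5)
This statement is quoted from Conlon and Stein~\cite{ConlonStein2016} and is used as a black box. If I had to prove it, I would follow the absorption scheme of Erd\H{o}s, Gy\'arf\'as and Pyber~\cite{ErdosGyarfasPyber1991}. It has three stages: build a monochromatic absorbing structure on a linear-size vertex set, greedily cover almost everything else by few monochromatic cycles, and then absorb the small leftover into the structure. The local hypothesis must replace the global bound on the number of colours at every place where that bound is used.

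\emph{Absorbing structure.} Pick any vertex $v$. Since $\dd(v)\le r$, some colour $c$ satisfies $|N_c(v)|\ge (n-1)/r$, and every vertex of $N_c(v)$ has $c$ in its palette. Inside $N_c(v)$ I would look for a colour-$c$ ``triangle-cycle''. This is a set $T$ together with a partner set $S$ such that for every $S'\subseteq S$ the set $T\cup S'$ spans a Hamiltonian cycle in colour~$c$. Both $|T|$ and $|S|$ should be at least $\varepsilon_r n$. To find it, I would first use the local condition to extract a colour-$c$ bipartite subgraph of density at least roughly $1/r$. I would then apply Haxell's bipartite lemma, or a regularity argument, to obtain a long colour-$c$ path whose consecutive vertices have many common colour-$c$ neighbours. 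The set $S$ is taken among those common neighbours.

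\emph{Greedy covering.} Remove $T\cup S$ and work in the remaining graph, which is still $r$-locally coloured. I would repeatedly extract monochromatic cycles, each time reducing the number of uncovered vertices by a factor $1-\delta_r$. Here Pósa's bound (\Cref{lem:posa}) applied to a colour class is used together with a Ramsey-type estimate. The estimate says that in an $r$-local colouring the independence number of some colour class, restricted to a suitable large subset, is bounded in terms of $r$. After $O(r\log r)$ rounds, each producing $O(r)$ cycles, the uncovered set $L$ is small compared with $|S|$. This gives $O(r^2\log r)$ cycles in total.

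\emph{Absorption.} I would cover $L$ together with part of $S$ by vertex-disjoint monochromatic cycles of colour $c$, or by $O(r)$ cycles in other colours, using the edges from $L$ to $S$. Then the unused part $S'\subseteq S$ is swallowed by the Hamiltonian cycle on $T\cup S'$.

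\emph{Main obstacle.} The hardest step is the greedy covering. With unboundedly many colours overall, the usual ``some colour has density $1/r$'' argument must be run locally inside neighbourhoods. In addition, the leftover vertices must have many colour-$c$ edges into $S$ for the absorption to work. I would handle this by choosing $S$ among vertices whose palettes share $c$, and by passing to the at most $r$ dominant colour classes seen from a fixed base vertex.
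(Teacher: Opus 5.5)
Citing the result as a black box is exactly what the paper does. It gives no proof of this statement, only the reference to Conlon and Stein and the remark that one may take $g(r)=O(r^2\log r)$. Your reconstruction sketch, however, would not work as a proof; it has three concrete gaps.

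\emph{The absorbing structure.} The absorbing colour cannot be the majority colour at an arbitrary vertex $v$. Colour every edge at $v$ with $c$ and every other edge with $d$. This colouring is $2$-local and $N_c(v)=V\setminus\{v\}$, yet colour $c$ is only a star, so no colour-$c$ absorber exists. Nor can a colour-$c$ bipartite subgraph carry the structure you describe, for two reasons. Consecutive vertices of a colour-$c$ path that share a colour-$c$ neighbour span a colour-$c$ triangle. And if $T$ and $T\cup\{s\}$ both span Hamiltonian cycles of a bipartite graph, both would have to be balanced, which is impossible. So the colour has to be chosen globally. The absorber itself must come from a Ramsey-type argument that yields a non-bipartite bounded-degree monochromatic structure, such as a linear-size triangle-cycle, not from a bipartite density lemma.

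\emph{The greedy step.} The estimate driving this step is false. Take a uniformly random $2$-colouring of $K_n$, which is $2$-local. With high probability neither colour has a clique of order $2\log_2 n$. By Ramsey's theorem, both colour classes then have independence number $\Omega(\log n/\log\log n)$ on every vertex set of linear size. So P\'osa's lemma applied to a colour class gives no bounded number of cycles. A working greedy step uses a different fact. Since every vertex sees at most $r$ colours, some colour has at least $(m-1)^2/(2r^2)$ edges on the $m$ uncovered vertices. Removing a long cycle in that colour, via Erd\H{o}s--Gallai, covers an $\Omega(1/r^2)$ fraction of the uncovered vertices per round.

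\emph{The final absorption.} The leftover set $L$ need not send any colour-$c$ edges to $S$, so covering it in colour $c$ is not available in general. What is needed is a bipartite covering lemma in all colours. It can be proved via P\'osa's lemma on an auxiliary common-neighbourhood graph, much as in \Cref{lem:dense-common}, and it covers $L$ by boundedly many monochromatic cycles using only few vertices of $S$. The absorber then swallows whatever part of $S$remains unused.
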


We remark that one may take $g(r)=O(r^2\log r)$ in \cref{thm:local-complete}. In
the two branches of the main proof involving high colour-degree vertices, we
shall form a balanced bipartite graph whose cross-edges use at most three
colours.  The following result will cover the resulting pair.

\begin{theorem}[Lang--Schaudt--Stein \cite{LangSchaudtStein2017}]
  \label{thm:three-colour-bipartite}
  For sufficiently large $n$, the vertices of every $K_{n,n}$ whose edges are
  coloured with at most three colours can be partitioned into at most $18$
  disjoint monochromatic cycles.
\end{theorem}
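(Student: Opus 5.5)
This is the result of Lang, Schaudt and Stein, which the paper quotes rather than proves. Here is how I would reprove it with the now-standard ``absorption plus regularity'' scheme for monochromatic cycle partitions. Let $G=K_{n,n}$ have parts $V_1,V_2$ and a $3$-colouring of its edges. The aim is a bounded number of monochromatic cycles covering $V(G)$, with the constant $18$ coming out of the bookkeeping.

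\emph{Step 1: an absorber.} Fix a colour $c$ that is dense in a robust sense. For instance, apply the $3$-colour bipartite regularity lemma and pick a colour whose reduced graph has a large connected matching. Inside colour $c$, I would build a small absorbing structure: a $c$-coloured path or cycle $P_{\mathrm{abs}}$ together with a ``reservoir'' $R\subseteq V_1\cup V_2$ of size $\gamma n$. The required property is that for every balanced set $S\subseteq R$, the graph $P_{\mathrm{abs}}\cup S$ contains a spanning $c$-cycle. I would build it from many vertex-disjoint small gadgets, each absorbing one vertex from each side, found by counting in the regular $c$-pairs. The bipartite parity constraint, that a cycle uses equally many vertices from each side, is handled by always absorbing in balanced pairs.

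\emph{Step 2: almost-covering.} Delete $P_{\mathrm{abs}}$ and apply the multicolour regularity lemma to the rest. The reduced graph $\Gamma$ is an almost complete bipartite graph whose edges carry up to three colours. I would show that $V(\Gamma)$ is covered, up to $o(|\Gamma|)$ clusters, by a bounded number of monochromatic connected matchings. This uses the structure of $3$-coloured almost complete bipartite graphs: each colour splits into monochromatic components, a case analysis on how components of different colours intersect the two sides is needed, and K\"onig-type matching arguments are applied inside each component. Via the blow-up and regularity machinery, each connected matching becomes a single monochromatic cycle covering almost all vertices of its clusters. This leaves a small balanced-enough leftover set $L$.

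\emph{Step 3: cleaning up.} The leftover $L$ is first reduced, greedily, by short monochromatic paths into the reservoir, so that what remains is a balanced subset of $R$ together with only $o(n)$ exceptional vertices. The exceptional vertices are inserted into the cycles of Step 2 by rerouting, using that each such vertex has linearly many neighbours in some colour into some regular pair. Finally, the absorber swallows the remaining reservoir vertices.

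I expect the main obstacle to be Step 2 combined with the exceptional-vertex treatment. With three colours, a vertex may see its dense colour only inside a monochromatic component that the connected matchings fail to reach. One then needs a careful structural analysis of the three colour-component families on both sides, giving the bounded number (eventually $18$) of components to be used. My first attempt would be to classify by the number of monochromatic components in each colour meeting both sides, and to treat the extremal ``split'' configurations separately with ad hoc cycles.
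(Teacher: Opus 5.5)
The paper does not prove this theorem; it quotes it from Lang, Schaudt and Stein. Your outline therefore has to stand on its own, and as written it does not prove the statement.

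First, the statement is about the explicit bound $18$, and nothing in your sketch produces it. The real combinatorial core is controlling how many monochromatic connected matchings cover all but an $\varepsilon$-fraction of a $3$-coloured almost complete bipartite reduced graph, with a count independent of $\varepsilon$. The cited paper's main result is of this kind: five cycles cover all but $o(n)$ vertices. You defer all of this to a case analysis you never carry out, and you never add up the final number of cycles (absorber, Step~2 cycles, clean-up cycles). A merely bounded number is cheap: repeatedly remove a largest monochromatic connected matching, which always covers a constant fraction of what remains. But that count grows as $\varepsilon$ shrinks, and $\varepsilon$ must be tiny compared with the reservoir, so it says nothing about $18$.

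Second, Step~3 fails as described. Covering the leftover ``greedily by short monochromatic paths into the reservoir'' gives as many pieces as there are leftover vertices, not boundedly many cycles. That could only be avoided if the paths were $c$-coloured and joined the absorber, which you cannot arrange, since a leftover vertex may have almost no $c$-neighbours.

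Inserting exceptional vertices into the Step~2 cycles by rerouting fails for two reasons:
\begin{itemize}
\item In a bipartite host, a vertex of $V_1$ can only enter a cycle together with a vertex of $V_2$, or by ejecting another vertex of $V_1$.
\item The colour in which a vertex is dense may only reach clusters traversed by cycles of other colours, or by no cycle at all.
\end{itemize}

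The missing idea is the Erd\H{o}s--Gy\'arf\'as--Pyber/Haxell device that this paper itself uses in \cref{lem:dense-common}:
\begin{itemize}
\item Set the reservoir aside \emph{before} the regularity step; your Step~2 deletes only $P_{\mathrm{abs}}$.
\item Cover the leftover on each side by $O(1)$ monochromatic cycles of arbitrary colours, routed through few reservoir vertices on the opposite side, via \cref{lem:posa}.
\item Replace singleton auxiliary cycles by single edges into the reservoir, so that the unused part of the reservoir stays balanced.
\item Only then absorb that balanced remainder in colour~$c$.
\end{itemize}
Even after this repair, you would still have to show that the total is at most $18$, which is precisely the content of the cited theorem.
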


The next lemma is the interface between the dense monochromatic part of the
argument and P\'osa's lemma.  It is deliberately one-sided: only the smaller
part must be covered.
Its proof is adapted from the work of Erdős, Gyárfás and Pyber~\cite{ErdosGyarfasPyber1991}.
In the main proof, $H$ will consist of the edges of one
fixed colour.

\begin{lemma}\label{lem:dense-common}
  Let $H$ be a bipartite graph with parts $X$ and $Y$, and put
  $x:=|X|$ and $y:=|Y|$. If $x\ge12y$ and $d_H(v)\ge3x/8$ for every
  $v\in Y$, then $Y$ can be covered by at most three vertex-disjoint cycles.
\end{lemma}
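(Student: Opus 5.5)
The plan follows the Erdős--Gyárfás--Pyber strategy. I would pass to an auxiliary graph $\Gamma$ on vertex set $Y$ in which two vertices are adjacent when they have many common neighbours in $X$. Then I would bound $\alpha(\Gamma)\le 3$, apply \Cref{lem:posa} to $\Gamma$, and finally lift each cycle of $\Gamma$ to a cycle of $H$ by routing every edge of $\Gamma$ through a private common neighbour in $X$.

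Concretely, declare $v,w\in Y$ adjacent in $\Gamma$ if $|N_H(v)\cap N_H(w)|\ge x/12$. To see that $\alpha(\Gamma)\le 3$, take any four distinct vertices $v_1,\dots,v_4\in Y$. Since all neighbourhoods lie in $X$, inclusion--exclusion (Bonferroni) gives
\[
  x\ \ge\ \Bigl|\bigcup_{i} N_H(v_i)\Bigr|\ \ge\ \sum_i d_H(v_i)-\sum_{i<j}|N_H(v_i)\cap N_H(v_j)|\ \ge\ \tfrac{3x}{2}-\sum_{i<j}|N_H(v_i)\cap N_H(v_j)|.
\]
Hence the six pairwise intersections sum to at least $x/2$, so some pair has at least $x/12$ common neighbours. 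Such a pair is an edge of $\Gamma$, so no four vertices are independent. By \Cref{lem:posa}, $Y$ is then partitioned into at most three cycles of $\Gamma$, where, as throughout the paper, single vertices and single edges count as cycles.

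Next I would convert these into vertex-disjoint cycles of $H$ covering $Y$. A single vertex of $Y$ is already a cycle. For a $\Gamma$-cycle $v_1v_2\cdots v_kv_1$ with $k\ge3$, I would choose for each edge $v_iv_{i+1}$ a vertex $u_i\in N_H(v_i)\cap N_H(v_{i+1})$, all distinct and distinct from the choices made for the other $\Gamma$-cycles. This gives the cycle $v_1u_1v_2u_2\cdots v_ku_kv_1$ in $H$. For a $\Gamma$-edge $vw$ (a $2$-vertex cycle), I would choose two distinct common neighbours $u,u'$ and form the $4$-cycle $vuwu'$.

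The choices are made greedily, and this is the only point needing care. In total at most $y$ vertices of $X$ are required, since each $\Gamma$-cycle on $k\ge2$ vertices uses exactly $k$ of them. When a new one is picked, at most $y-1$ have been used already. Every $\Gamma$-edge has at least $x/12\ge y$ common neighbours, because $x\ge 12y$, so a fresh vertex is always available.

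This yields at most three vertex-disjoint cycles of $H$ covering $Y$ and using at most $y$ vertices of $X$, which is the bound invoked in the overview. I expect no real obstacle beyond checking that the threshold $x/12$ simultaneously gives $\alpha(\Gamma)\le3$ and suffices for the greedy routing. This is exactly where the constants $3/8$ and $12$ come from.
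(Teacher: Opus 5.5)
Your proof is correct and follows the paper's argument essentially step for step. Both build an auxiliary graph on $Y$ joining pairs with many common neighbours, prove $\alpha\le 3$ by counting over four vertices, apply P\'osa's lemma, and greedily lift each auxiliary edge through a private common neighbour, using at most $y$ vertices of $X$. The differences are only cosmetic: your threshold is $x/12$ (which is at least $y$) where the paper uses $y$, and you phrase the count via the Bonferroni inequality rather than the equivalent bound $\binom{t_w}{2}\ge t_w-1$.
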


\begin{proof}
  The assertion is immediate when $y\le3$, so suppose that $y\ge4$ and let
  $N_H(u,v)$ denote the joint neighbourhood of vertices $u$ and $v$ in $H$. In
  order to apply~\Cref{lem:posa}, define an auxiliary graph $J$ on $Y$ by
  putting an edge $uv$ in $J$ if and only if $|N_H(u,v)|\ge y$.
  
  We claim that $\alpha(J)\le3$. Indeed, take four vertices
  $v_1,v_2,v_3,v_4\in Y$. For $w\in X$, let $t_w$ be the number of these four
  vertices adjacent to $w$. Since $\binom{t}{2}\ge t-1$ for every
  non-negative integer $t$, we have
  \[
    \sum_{1\le i<j\le4}|N_H(v_i,v_j)|
      =\sum_{w\in X}\binom{t_w}{2}
      \ge\sum_{w\in X}(t_w-1)
      \ge\frac{3x}{2}-x
      =\frac{x}{2}\ge6y\,.
  \]
  Thus some pair has at least $y$ common neighbours and is an edge of $J$.
  Every four vertices of $J$ therefore span an edge, proving the claim.

  By \cref{lem:posa}, the vertices of $J$ can be partitioned into at most three
  cycles. We now realise these auxiliary cycles as cycles in $H$. For every pair
  $u,v$ of consecutive vertices on an auxiliary cycle of order at least three,
  request one vertex $z_{uv}$ from $N_H(u,v)$. If an auxiliary cycle has order
  two, and hence consists of a single edge $uv$, request two distinct vertices
  $z_1,z_2$ from $N_H(u,v)$. No vertex is requested for a singleton cycle. The
  total number of requests is the sum of the orders of the non-singleton
  auxiliary cycles, so it is at most $y$. Each requested common neighbourhood
  has size at least $y$ by the definition of $J$ and we may therefore choose
  all the requested vertices to be distinct.

  For an auxiliary cycle of order at least three in $J$, replace each of its
  edges $uv$ by the path~$u z_{uv}v$, giving a cycle in $H$. For an
  auxiliary cycle of order two, the four-vertex cycle~$uz_1vz_2u$ is a cycle of
  $H$. Finally, retain every singleton cycle as a singleton in $H$. Since all
  the chosen vertices are distinct, this yields at most three vertex-disjoint
  cycles of $H$ covering~$Y$, and it uses at most $y$ vertices of $X$.
\end{proof}

\section{Proof of the main theorem}\label{sec:proof}

We first isolate the two ingredients that drive the proof. Given an
edge-coloured graph $G$ and a vertex set $S\subseteq V(G)$, define
$\dd(S):=\sum_{v\in S}\dd(v)$.

\begin{lemma}[Palette dichotomy]\label{lem:palette-dichotomy} Let $H$ be an
edge-coloured complete graph, and let $A$ and~$B\neq\emptyset$ be disjoint
subsets of $V(H)$. If $\dd(A)+\dd(B)\le3(|A|+|B|)$ and
\begin{align*}
&\dd(u) \leq 2\text{ for every }u\in A,\quad\text{and}\\
&\dd(v) \geq 15 \text{ for every }v\in B\,,
\end{align*}
then $|A| \ge 6|B|$ and either all palettes of vertices in $A$ contain a common
colour, or
\[
  \left|\bigcup_{u\in A}\cP(u)\right|\le3.
\]
\end{lemma}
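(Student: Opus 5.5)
The plan is to prove the two conclusions separately: the bound $|A|\ge6|B|$ by counting, and the dichotomy by a short analysis of pairwise-intersecting sets of size at most two.

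\emph{Counting step.} Since $B\neq\emptyset$, some vertex of $B$ has colour-degree at least $15$. Hence $H$ has at least $16$ vertices, and every vertex of $H$ has colour-degree at least $1$. In particular $\dd(A)\ge|A|$, and $\dd(B)\ge15|B|$ by hypothesis. Substituting into $\dd(A)+\dd(B)\le3(|A|+|B|)$ gives
\[
|A|+15|B|\le3|A|+3|B|,
\]
so $12|B|\le2|A|$, that is, $|A|\ge6|B|$.

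\emph{Intersection property.} The palettes $\cP(u)$ for $u\in A$ pairwise intersect. Indeed, for distinct $u,w\in A$ the edge $uw$ exists because $H$ is complete, and its colour lies in $\cP(u)\cap\cP(w)$. Each such palette is nonempty and has size at most $2$.

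\emph{Dichotomy.} If $A=\emptyset$ there is nothing to prove, so fix $u_0\in A$. If $\cP(u_0)=\{c\}$, then every palette meets $\{c\}$ and hence contains $c$, so $c$ is a common colour. Otherwise $\cP(u_0)=\{a,b\}$. If every palette contains $a$, or every palette contains $b$, we are done. If not, there are $u_1,u_2\in A$ with $a\notin\cP(u_1)$ and $b\notin\cP(u_2)$. Since $\cP(u_1)$ meets $\{a,b\}$, it contains $b$; likewise $\cP(u_2)$ contains $a$. Clearly $u_1\ne u_2$, so $\cP(u_1)\cap\cP(u_2)\neq\emptyset$. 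This common colour is neither $a$ nor $b$, so it is some third colour $c$, and we get $\cP(u_1)=\{b,c\}$ and $\cP(u_2)=\{a,c\}$. Now any palette $\cP(u)$ with $u\in A$ has at most two elements and must meet each of $\{a,b\}$, $\{b,c\}$ and $\{a,c\}$. A set of size at most two meeting all three of these pairs must contain at least two of $a,b,c$, and so lies in $\{a,b,c\}$. Hence
\[
\left|\bigcup_{u\in A}\cP(u)\right|\le3.
\]

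No step here is a real obstacle. The only care needed is to justify $\dd(u)\ge1$ for $u\in A$ (used in the counting step), which follows because $B\neq\emptyset$ forces $|V(H)|\ge16$. The bound $\dd(u)\le2$ is exactly what makes the intersecting-family analysis this short.
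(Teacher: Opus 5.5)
Your proof is correct and follows essentially the same route as the paper. The bound $|A|\ge 6|B|$ comes from $\dd(A)\ge|A|$ and $\dd(B)\ge 15|B|$. The dichotomy comes from the fact that a pairwise-intersecting family of palettes of size at most two, with no common colour, consists only of pairs from some $\{\alpha,\beta,\gamma\}$. You also justify explicitly that palettes are non-empty, via $|V(H)|\ge 16$, which the paper leaves implicit.
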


\begin{proof}
Put $a:=|A|$ and $b:=|B|$. Since every palette is non-empty,
  \[
    a+15b\le \dd(A)+\dd(B)\le3(a+b)\,,
  \]
  and hence $a\ge6b\ge6$. For any two $u,v\in A$, the colour of $uv$
  belongs to $\cP(u)\cap\cP(v)$. Thus the palettes in $A$ form a
  pairwise-intersecting family of non-empty sets of size at most two.
  Suppose that this family has no common colour. It contains no singleton, since
  the colour in a singleton would then belong to every palette. Choose a palette
  $\{\alpha,\beta\}$. Since neither~$\alpha$ nor~$\beta$ is common, pairwise
  intersection gives palettes $\{\beta,\gamma\}$ and $\{\alpha,\gamma\}$ for
  some third colour~$\gamma$. Every set of size two meeting all three of these
  palettes is one of $\{\alpha,\beta\}$, $\{\alpha,\gamma\}$ and
  $\{\beta,\gamma\}$. Hence every palette in $A$ is contained in
  $\{\alpha,\beta,\gamma\}$, proving the second alternative.
\end{proof}

For an edge-coloured graph $G$, a colour $\gamma$, a vertex $v\in V(G)$, and a
set $S\subseteq V(G)\setminus\{v\}$, we write $N_\gamma(v,S)$ for the set of
vertices $u\in S$ for which $uv$ has colour $\gamma$. The next lemma
converts a bound on the total palette size of one side into the balanced
bounded-colour core required by \cref{thm:three-colour-bipartite}.

\begin{lemma}[Bounded-colour core]\label{lem:bounded-colour-core} Let $H$ be an
  edge-coloured complete graph, and let~$X$ and~$Y'\neq\emptyset$ be disjoint
subsets of $V(H)$. Let $c$ be a colour, $X_0\subseteq X$ and put~$\ell:=\lfloor 5|X|/8\rfloor+1$. If
\begin{align*}
&c\in\cP(u)\text{ and }\dd(u)\le2
  &&\text{for every }u\in X,\quad\text{and}\\
&|X\setminus N_c(v,X)|\ge\ell
  &&\text{for every }v\in Y'\,,
\end{align*}
and
\[
  15|Y'|\le\dd(Y')\le |X|+3|Y'|-12|X_0|\,,
\]
then there is a set $X^*\subseteq X\setminus X_0$ with $|X^*|=|Y'|$ such that
the complete bipartite graph between $X^*$ and $Y'$ uses at most three
colours.
\end{lemma}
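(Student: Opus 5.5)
The plan is to choose a single \emph{witness} vertex $w\in Y'$ and build $X^*$ from at most two non-$c$ colour classes of $w$ inside $X\setminus X_0$. The observation behind this is the one from the overview. If $u\in X$ and the edge $uw$ has a colour $\gamma\neq c$, then $\gamma\in\cP(u)$. Since also $c\in\cP(u)$ and $\dd(u)\le2$, this forces $\cP(u)=\{c,\gamma\}$. Consequently every edge from such a $u$ to \emph{any} vertex of $Y'$ has colour $c$ or $\gamma$. So if $X^*\subseteq N_{\gamma_1}(w,X)\cup N_{\gamma_2}(w,X)$ with $\gamma_1,\gamma_2\ne c$, then the complete bipartite graph between $X^*$ and $Y'$ uses only colours from $\{c,\gamma_1,\gamma_2\}$. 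It therefore suffices to find $w$ and two non-$c$ colours whose classes at $w$, restricted to $X\setminus X_0$, have total size at least $|Y'|$. We then take $X^*$ to be any $|Y'|$ of those vertices.

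Choose $w\in Y'$ minimising $d:=\dd(w)$. Averaging the upper bound on $\dd(Y')$ gives
\[
  d\,|Y'|\le\dd(Y')\le|X|+3|Y'|-12|X_0|,
\]
and the lower bound gives $d\ge15$. The second hypothesis gives $|X\setminus N_c(w,X)|\ge\ell>5|X|/8$, so $w$ has at least $m:=\ell-|X_0|>5|X|/8-|X_0|$ non-$c$ neighbours in $X\setminus X_0$. These are distributed among at most $d$ colours of $\cP(w)$ (in fact at most $d-1$ non-$c$ ones, but $d$ suffices). By pigeonhole, the two largest classes together contain at least $2m/d$ vertices. This uses $d\ge2$, which holds since $d\ge15$.

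It remains to check that $2m/d\ge|Y'|$, that is, $2m\ge d|Y'|$. By the bounds above it is enough to show
\[
  \tfrac54|X|-2|X_0|\ge|X|+3|Y'|-12|X_0|,
  \quad\text{equivalently}\quad
  \tfrac14|X|+10|X_0|\ge3|Y'|.
\]
Combining $15|Y'|\le\dd(Y')\le|X|+3|Y'|-12|X_0|$ yields $|X|\ge12|Y'|+12|X_0|\ge12|Y'|$, so $\tfrac14|X|\ge3|Y'|$ and the inequality holds. The strictness of $\ell>5|X|/8$ gives some slack, so no rounding issues arise. If $m\le0$ the computation would fail, but $m>5|X|/8-|X_0|\ge |X|/2>0$ because $|X_0|\le|X|/12$.

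The main point needing care is the choice of the witness. It must be a vertex of \emph{small} colour-degree, so that the non-$c$ neighbourhood is not spread over too many colours. The averaging bound with the $-12|X_0|$ term is exactly what compensates for discarding the reservoir $X_0$. Everything else is the palette-forcing observation and a pigeonhole count.
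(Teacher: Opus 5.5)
Your argument follows the paper's proof: a witness $w\in Y'$ of at most average colour-degree, pigeonhole on its non-$c$ colour classes inside $X\setminus X_0$, and palette forcing $\cP(u)=\{c,\gamma\}$. Your verification of $2m>d|Y'|$ via $|X|\ge12|Y'|+12|X_0|$ is correct.

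There is one false step: ``the lower bound gives $d\ge15$''. The hypothesis $15|Y'|\le\dd(Y')$ bounds only the \emph{average} colour-degree on $Y'$. The lemma does not assume $\dd(v)\ge15$ for each $v\in Y'$, so your minimiser may have $\dd(w)=1$. For instance, all edges at $w$ could have colour $\gamma\ne c$, while another vertex of $Y'$ has large colour-degree through vertices outside $X\cup Y'$. In that case your pigeonhole bound $2m/d=2m$ is false.

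The inequality $2m>d|Y'|$ never used $d\ge15$; it used only $d\le\dd(Y')/|Y'|$. So only the case of a single non-$c$ class needs separate treatment, exactly as the paper does in its case $t_v=1$. There all $m$ available vertices lie in one non-$c$ class, and
$m>\tfrac58|X|-|X_0|\ge\tfrac{15}{2}|Y'|+\tfrac{13}{2}|X_0|\ge|Y'|$.
Equivalently, pad to $\max\{d,2\}$ classes; then the two largest classes have total size at least $2m/\max\{d,2\}\ge|Y'|$ in both cases. With this one-line repair the proof is complete.
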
 

\begin{proof}
Put $x:=|X|$, $y':=|Y'|$, $x_0:=|X_0|$ and $t:=\dd(Y')/y'\geq 15$. For each
$v\in Y'$, we define $t_v:=|\cP(v)\setminus\{c\}|$. Thus, since every vertex of
$Y'$ has a non-$c$ neighbour in $X$, we have $t_v\ge1$, which combined with
$\sum_{v\in Y'}t_v\le\dd(Y')=ty'$ implies that some vertex $v\in Y'$ satisfies
$1\le t_v\le t$.

Let $\sigma:=\ell-x_0$. Deleting $X_0$ removes at most $x_0$ non-$c$ neighbours
of $v$, so $v$ has at least~$\sigma$ non-$c$ neighbours in $X\setminus X_0$.
The key estimate that will allow us to choose the $y'$ vertices forming $X^*$
is
\begin{equation}\label{eq:core-target}
  \frac{2\sigma}{t}>y'\,.
\end{equation}
To see that \eqref{eq:core-target} holds, note first that the bounds on
$\dd(Y')=ty'$ give
$$
x\ge ty'-3y'+12x_0\ge 4ty'/5+12x_0,
$$
where we used that $t\geq 15$ in the last inequality. Thus, since
$\ell>5x/8$, it follows that
$$2\sigma>5x/4-2x_0\ge ty'+13x_0\ge ty',
$$
which proves \eqref{eq:core-target}.

Now let us conclude the proof of the lemma from~\eqref{eq:core-target}. Suppose
first that $t_v=1$. Then the at least $\sigma$ available non-$c$ neighbours
of $v$ belong to the same non-$c$ colour class. By \eqref{eq:core-target} and~$t\ge15$, we have $\sigma>ty'/2>y'$, so this class contains at least $y'$
available vertices. In the case where $t_v\ge2$, write
$d_1\ge\cdots\ge d_{t_v}\ge0$ for
the orders of the corresponding non-$c$ colour classes among these neighbours,
including empty classes if necessary. Since their total order is at least
$\sigma$, we have
\[
  d_1+d_2
  \ge\frac{2}{t_v}\sum_{i=1}^{t_v}d_i
  \ge\frac{2\sigma}{t_v}
  \ge\frac{2\sigma}{t}
  >y'\,,
\]
where the penultimate inequality uses $t_v\le t$. Thus, in either case, the
union of at most two such classes contains at least $y'$ available vertices.

Choose a set $X^*\subseteq X\setminus X_0$ of order $y'$ from these classes.
Let $c_1$ and $c_2$ be their colours; if there is only one such colour, set
$c_2:=c_1$. Thus, for every $u\in X^*$, the edge $uv$ has colour $c_1$ or
$c_2$. This edge is not of colour $c$. Since $c\in\cP(u)$ and
$\dd(u)\le2$, we have
$\cP(u)\subseteq\{c,c_1,c_2\}$. Therefore every edge between $X^*$ and $Y'$
has colour in the set $\{c,c_1,c_2\}$, which has size at most three.
\end{proof}

\subsection{Proof of the main result}

We now combine the ingredients we obtained so far to prove~\Cref{thm:main}. After covering the vertices of intermediate colour-degree, we use the mean condition and the palette dichotomy to analyse the interaction between the low and high colour-degree classes. In the common-colour case, Lemmas \ref{lem:dense-common} and \ref{lem:bounded-colour-core} cover the high colour-degree
vertices, while the remaining $2$-locally coloured vertices are handled by
\Cref{thm:local-complete}.

\begin{proof}[Proof of \Cref{thm:main}]
Let $K_n$ be edge-coloured by $\chi$ and suppose that $\sum_{v\in V(K_n)}\dd(v)\le3n$. The case $n=1$ is trivial, so assume $n\ge2$.

For clarity, we divide the proof into four parts: \emph{Cleaning and
dichotomy}, \emph{Common-colour absorption}, \emph{Residual core}, and
\emph{Completion}. The first part provides
the basic reduction, the next two cover the high colour-degree vertices, and the
last one covers the remaining vertices and counts the cycles.

\medskip
\noindent\emph{Cleaning and dichotomy.} We split the vertices of $K_n$ into a
$2$-local reservoir, an immediately coverable middle class, and a high
colour-degree exceptional set as follows:
\begin{align*}
  A&:=\{v\colon\dd(v)\le2\}\,,\\
  M&:=\{v\colon3\le\dd(v)\le14\}\,,\\
  B&:=\{v\colon\dd(v)\ge15\}\,.
\end{align*}
If $M$ is non-empty, the complete graph induced by $M$ is $14$-locally
coloured, so \cref{thm:local-complete} partitions $M$ into at most $g(14)$
monochromatic cycles; if $M$ is empty, no cycle is needed.  It remains to
cover $A\cup B$.

Put $a:=|A|$ and $b:=|B|$.  Since every vertex of $M$ has colour-degree at
least three and the colouring is $3$-mean,
\begin{equation}\label{eq:residual-mean}
  \dd(A)+\dd(B)
  \le 3(a+b)\,.
\end{equation}
If $B=\varnothing$, apply \cref{thm:local-complete} to the $2$-locally coloured
complete graph induced by $A$ (possibly empty, when $A=\varnothing$).  We may
thus assume that $b>0$.

Since we have \eqref{eq:residual-mean}, we may
apply~\cref{lem:palette-dichotomy} to $A$ and $B$, obtaining $a\ge6b$.
Moreover, either 
\begin{enumerate}
\item $|\bigcup_{u\in A}\cP(u)|\le3$, or
\item all the palettes of vertices in $A$ have a common colour.
\end{enumerate}

\vspace{0.2cm}
\noindent\textit{Case 1}: $|\bigcup_{u\in A}\cP(u)|\le3$.

If $b$ is not sufficiently large for \cref{thm:three-colour-bipartite}, cover
the vertices of $B$ by singleton cycles; this uses only a bounded number of
cycles. Otherwise, choose any set $A^*\subseteq A$ of order $b$, which is
possible because $a\ge6b$. The edges between $A^*$ and $B$ use at most three
colours, so \cref{thm:three-colour-bipartite} partitions $A^*\cup B$ into at
most $18$ monochromatic cycles. In either case, all uncovered vertices lie in
$A$ and are partitioned into at most $g(2)$ monochromatic cycles by
\cref{thm:local-complete}. This proves the theorem in this case.

\vspace{0.2cm}
\noindent\textit{Case 2}: all the palettes of vertices in $A$ have a common colour.

Fix such a common colour $c$, and put
\[
  A_c:=\{u\in A\colon\cP(u)=\{c\}\},\qquad a_c:=|A_c|\,.
\]
We start with the following simple observation.
\begin{observation}\label{obs:monochromatic-biclique}
If $U\subseteq A$ and $V\subseteq B$ are non-empty sets with $|U|=|V|$ and
every edge between $U$ and $V$ has colour $c$, then $U\cup V$ is covered by one
colour-$c$ cycle (or edge).
\end{observation}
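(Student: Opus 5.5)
We prove \Cref{obs:monochromatic-biclique} directly, by writing down the cycle. Put $k:=|U|=|V|$ and enumerate $U=\{u_1,\dots,u_k\}$ and $V=\{v_1,\dots,v_k\}$. By hypothesis every edge $u_iv_j$ is present in $K_n$ and has colour $c$, so the complete bipartite graph between $U$ and $V$ lies entirely in colour $c$. It therefore suffices to exhibit a spanning cycle of $K_{k,k}$, under the paper's convention that a single edge counts as a cycle.

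We distinguish two cases according to $k$.
\begin{itemize}
\item If $k=1$, the single edge $u_1v_1$ has colour $c$ and covers $U\cup V$.
\item If $k\ge2$, consider the closed walk
\[
u_1v_1u_2v_2\cdots u_kv_ku_1 .
\]
Its $2k$ vertices are pairwise distinct, because $U$ and $V$ are disjoint (one lies in $A$, the other in $B$) and the $u_i$ and $v_j$ are each distinct. Every consecutive pair is of the form $u_iv_i$ or $v_iu_{i+1}$ (indices modulo $k$), so each is a $U$--$V$ edge and hence has colour $c$. Since $2k\ge4$, this walk is a genuine Hamilton cycle of colour $c$ on $U\cup V$.
\end{itemize}

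There is no real obstacle here. The only points needing care are the disjointness of $U$ and $V$, which makes the listed vertices distinct, and the degenerate case $k=1$, which is handled by the single-edge convention.
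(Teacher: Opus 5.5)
Your proof is correct and is the argument the paper leaves implicit. The paper states this as a simple observation without proof: when $|U|=|V|\ge2$, the colour-$c$ complete bipartite graph between the disjoint sets $U$ and $V$ has an alternating Hamilton cycle, and when $|U|=|V|=1$ it is a single edge.
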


If $a_c\ge b$, we choose a set $A_c^*\subseteq A_c$ of order $b$. By
\cref{obs:monochromatic-biclique}, $A_c^*\cup B$ is covered by one colour-$c$
cycle. The remaining vertices lie in $A$ and are handled by
\cref{thm:local-complete}. We may then assume that
\begin{equation}\label{eq:a-c-small}
  a_c<b.
\end{equation}
Since every palette in $A$ contains $c$, we have
\[
  \dd(A)=a_c+2(a-a_c)=2a-a_c\,.
\]
Combining this identity, \eqref{eq:residual-mean}, and
$\dd(B)\ge15b$, we obtain
\[
  a+a_c\ge12b.
\]
Now choose a set $B_0\subseteq B$ of order $a_c$. The set $A_c\cup B_0$ can be
covered with at most one cycle. Indeed, if $a_c>0$,
\cref{obs:monochromatic-biclique} covers $A_c\cup B_0$ with one colour-$c$
cycle; if $a_c=0$, then $A_c=B_0=\varnothing$, so no cycle is needed to cover
these sets. Therefore, it remains to cover 
\[
  X:=A\setminus A_c,\qquad Y:=B\setminus B_0\,,
\]
and we write $x:=|X|$ and $y:=|Y|$. By~\eqref{eq:a-c-small}, we have $y>0$.
Moreover,
\begin{equation}\label{eq:peeled-ratio}
  x-12y=(a+a_c-12b)+10a_c\ge0\,.
\end{equation}

In summary, from now on we may assume that all the palettes of vertices in $A$
have a common colour and we have obtained disjoint sets $X,Y$ with $y>0$,
$x\ge12y$, and $\dd(u)=2$ and $c\in\cP(u)$ for every $u\in X$. The aim now is to
cover $Y$ using some vertices of $X$, with a bounded number of monochromatic
cycles, since the unused vertices of $X$ can then be handled by
\cref{thm:local-complete}.

\medskip
\noindent\emph{Common-colour absorption.} The purpose of this step is to cover
the vertices of $Y$ with many colour-$c$ neighbours in $X$, while using
only a controlled number of vertices of $X$. Every vertex left in $Y$ will
consequently have many non-$c$ neighbours in $X$, which is precisely the
condition needed to construct the bounded-colour core in the next part.

Split $Y$ into the sets
\begin{align*}
  Y':=\left\{v\in Y\colon|N_c(v,X)|<\frac{3x}{8}\right\},\quad\text{ and }\quad Y_c:=Y\setminus Y'
\end{align*}
and write $y':=|Y'|$ and $y_c:=|Y_c|=y-y'$.

The next step is to show that we can cover $Y_c$ with monochromatic cycles by
using at most~$y_c$ vertices of $X$. If $Y_c=\varnothing$, set
$X_0:=\varnothing$ and $x_0:=0$. If $Y_c$ is non-empty, we want to apply
\cref{lem:dense-common} to the bipartite graph of the colour-$c$ edges between
$X$ and $Y_c$. To see that we can apply this lemma, note that
\eqref{eq:peeled-ratio} gives $x\ge12y\ge12y_c$, and every vertex of $Y_c$ has
colour-$c$ degree at least $3x/8$ into $X$, so the conditions to apply
\cref{lem:dense-common} are satisfied. Therefore, we conclude that there are at
most three disjoint colour-$c$ cycles covering~$Y_c$. Let~$X_0\subseteq X$ be
the set of vertices of $X$ used by these cycles, and put $x_0:=|X_0|$. The last
assertion of \cref{lem:dense-common} gives $x_0\le y_c$.

Thus, in either case, $Y_c$ is covered and the set $X_0$ of vertices used from
$X$ satisfies $x_0\le y_c$; consequently, $X\setminus X_0$ remains available
for the residual core.

If $y'=0$, all remaining vertices lie in $X$ and can be handled by
\cref{thm:local-complete}. If $y'>0$ but is not sufficiently large for
\cref{thm:three-colour-bipartite}, we cover the vertices of $Y'$ by singleton
cycles and again apply \cref{thm:local-complete} to the remaining part of $X$.
We may therefore assume that $y'$ is sufficiently large for
\cref{thm:three-colour-bipartite}.

At this point, the only uncovered vertices outside $X$ are the vertices of $Y'$, and
$y'$ is sufficiently large. It remains to find a set
$X^*\subseteq X\setminus X_0$ of order $y'$ such that the bipartite graph
between $X^*$ and $Y'$ uses at most three colours; then
\cref{thm:three-colour-bipartite} covers $X^*\cup Y'$, and the remaining
vertices of $X$ can be handled by \cref{thm:local-complete}.

\medskip
\noindent\emph{Residual core.} The purpose of this step is to construct the set
$X^*$ announced above. We first show that every vertex of $Y'$ has many non-$c$
neighbours in $X$ and use the mean colour-degree condition to obtain the upper
bound on $\dd(Y')$ required by \cref{lem:bounded-colour-core}, which yields a
balanced bipartite core $X^*\cup Y'$ using at most three colours, to which
\cref{thm:three-colour-bipartite} applies.

Set $\ell:=\left\lfloor\frac{5x}{8}\right\rfloor+1$. Since
$Y':=\{v\in Y\colon|N_c(v,X)|<3x / 8\}$, for every $v\in Y'$ we have
\[
  x-|N_c(v,X)|
  \ge \left\lfloor\frac{5x}{8}\right\rfloor+1=\ell\,.
\]
This verifies the neighbourhood hypothesis of \cref{lem:bounded-colour-core}.
Indeed, after the previously used set $X_0$ is removed, every vertex of $Y'$
still has at least $\ell-x_0$ non-$c$ neighbours in $X\setminus X_0$; these are
the available vertices from which \cref{lem:bounded-colour-core} constructs
$X^*$.

Now, since every vertex of $Y'$ has colour-degree at least $15$,
\begin{equation}\label{eq:yprime-degree-lower}
  \dd(Y')\ge15y'\,.
\end{equation}
On the other hand, using \eqref{eq:residual-mean},
the identity $\dd(A)=2a-a_c$ and the inequalities $\dd(B_0)\ge15a_c$ and
$\dd(Y_c)\ge15y_c$, we have
\begin{equation}\label{eq:yprime-degree-upper}
\begin{aligned}
  \dd(Y')
  &\le3(a+b)-(2a-a_c)-15a_c-15y_c\\
  &=x+3y'-12y_c-10a_c\\
  &\le x+3y'-12x_0\,,
\end{aligned}
\end{equation}
where the last inequality follows from $x_0\le y_c$ and $a_c\ge0$.

Let us explicitly check that \cref{lem:bounded-colour-core} applies. The sets
$X$ and $Y'$ are disjoint, $Y'$ is non-empty by assumption, and
$X_0\subseteq X$. Every $u\in X$ satisfies
$c\in\cP(u)$ and $\dd(u)=2$, while the neighbourhood estimate above gives
$|X\setminus N_c(v,X)|\ge\ell$ for every $v\in Y'$. Finally, from
\eqref{eq:yprime-degree-lower} and \eqref{eq:yprime-degree-upper} we have
\[
  15y'\le\dd(Y')\le x+3y'-12x_0\,.
\]
Thus every hypothesis of \cref{lem:bounded-colour-core} has already been
established. Applying it, we obtain a set $X^*\subseteq X\setminus X_0$ of order
$y'$ such that the complete bipartite graph between $X^*$ and $Y'$ uses at most
three colours. Since $y'$ is sufficiently large,
\cref{thm:three-colour-bipartite} covers its vertices with at most $18$
disjoint monochromatic cycles.

\medskip
\noindent\emph{Completion.}
At this point every uncovered vertex lies in $X$.  The complete graph induced
by those vertices, if non-empty, is still $2$-locally coloured, so
\cref{thm:local-complete} covers it with at most~$g(2)$ disjoint
monochromatic cycles.  All cycle families constructed above are
vertex-disjoint: they successively use vertices from
\[
  M,\qquad A_c\cup B_0,\qquad Y_c\cup X_0,\qquad Y'\cup X^*,
  \qquad\text{and the remaining part of }X.
\]
The small exceptional sets treated with singleton cycles have bounded order,
and all other steps use at most $g(14)+4+g(2)+18$ cycles. Hence, in every case,
the total number of monochromatic cycles used to partition $V(K_n)$ is bounded
by an absolute constant. 
This proves \Cref{thm:main}.
\end{proof}

	\section*{Acknowledgments}

R. Lang was supported by the Ramón y Cajal programme (RYC2022-038372-I) and by
grant PID2023-147202NB-I00 funded by MICIU/AEI/10.13039/501100011033. G. O. Mota
was supported by CNPq (420838/2025-2 and 315916/2023-0) and FAPESP (2024/13859-4
and 2023/03167-5).

\end{document}